\newcommand{\defn}[1]{\textcolor{blue}{\emph{#1}}}
\newcommand*{\doi}[1]{doi: \href{https://dx.doi.org/#1}{\urlstyle{rm}\nolinkurl{#1}}}
\newcommand*{\arxiv}[1]{arXiv:  \href{https://arxiv.org/abs/#1}{\urlstyle{rm}\nolinkurl{#1}}}
\let\oldproofname=\proofname
\renewcommand{\proofname}{\rm\bf{\oldproofname}}
\newcommand{\RR}{\mathbb R}
\newcommand{\bna}{\begin{eqnarray}}
\newcommand{\ena}{\end{eqnarray}}
\newcommand{\ba}{\begin{eqnarray*}}
\newcommand{\ea}{\end{eqnarray*}}
\newcommand{\bs}[1]{}
\newtheorem{theorem}{Theorem}[section]
\newtheorem{lemma}[theorem]{Lemma}
\newtheorem{proposition}[theorem]{Proposition}
\newtheorem{definition}[theorem]{Definition}
\def\0{{\bf 0}}
\let\oldv=\v
\def\v{{\bf v}}
\def\x{{\bf x}}
\begin{document}
\title{
    Proving the Existence of a GOR Without Probability}

\author{
    Steven J. Gortler
    \and
    Louis Theran}
\date{}
\maketitle

\begin{abstract}
    In this note, we provide a new proof that a $D$-connected graph $G$ on $n$
    vertices has a general position orthogonal representation in $\RR^{n-D}$.
    Our argument, while based on many of the concepts from the original proof
    due to Lovász, Saks and Schrijver, does not use the probabilistic method.
\end{abstract}
\section{Introduction} \label{section:introduction}

Let $G$ be a graph with $n$ vertices and let $D\le n$ be a given dimension.
A \defn{representation} of $G$ in $\RR^D$ is an assignment to each vertex $i$ a vector
$\v_i$  in $\RR^D$.
An \defn{orthogonal representation (OR)} of $G$ in $\RR^D$ is an assignment to each vertex $i$ a vector
$\v_i$  in $\RR^D$ so that if $i\neq j$ is not an edge of $G$, then $\v_i \perp \v_j$.
A \defn{general position orthogonal representation (GOR)} of $G$ in $\RR^D$ is an OR with the added condition
that the $\v_i$ are in general linear position.
This means that any $D$ of the vectors $\v_i$
are linearly independent.
When clear from the context, we will often just refer to a
GOR without stating the graph or dimension.
We will also use the term $OR$ to refer the set of
all ORs for $G$ in $\RR^D$, and likewise the term $GOR$.
In~\cite{Lovasz-Schrijver,Lovasz-Schrijver2}, Lovász, Saks, and Schrijver prove the following,
strikingly simple, characterization of graphs with a GOR in dimension $\RR^D$.
\begin{theorem}
    \label{thm:main}
    A graph $G$ with $n$ vertices
    has a GOR in $\RR^D$ if and only if it is $(n-D)$-connected.
\end{theorem}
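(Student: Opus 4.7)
\medskip
\noindent\textbf{Proof plan.}

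The easy direction (GOR $\Rightarrow$ $(n-D)$-connected) should follow by a clean dimension count. Let $d = n-D$ and assume for contradiction that some $S \subseteq V(G)$ with $|S| \le d-1$ separates the remaining vertices into nonempty $A$ and $B$. From the OR property, $V_A := \operatorname{span}\{\v_a : a \in A\}$ and $V_B := \operatorname{span}\{\v_b : b \in B\}$ are orthogonal subspaces of $\RR^D$. From general linear position, $\dim V_A = \min(|A|,D)$ and $\dim V_B = \min(|B|,D)$. Since $|A|+|B| \ge n - (d-1) = D+1$, the two cases ($\max(|A|,|B|) \ge D$, or both $<D$) each contradict $\dim V_A + \dim V_B \le D$. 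So the only thing to do here is to write out the case split carefully.

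The hard direction (existence of a GOR whenever $G$ is $d$-connected, with $d = n-D$) is the real content and the place where the probabilistic argument of Lovász--Saks--Schrijver needs to be replaced. I would cast the problem algebro-geometrically: the set $\mathcal{O} \subset (\RR^D)^n$ of ORs of $G$ in $\RR^D$ is a real affine variety cut out by the non-edge orthogonality equations, and ``general linear position'' is a Zariski-open condition on $(\RR^D)^n$. It therefore suffices to produce an irreducible component of $\mathcal{O}$ that is not entirely contained in the ``some $D$ vectors are dependent'' locus. Equivalently, using the Gram picture, I would look for a symmetric PSD matrix $M$ of rank exactly $D$, vanishing on non-edges of $G$, with every $D\times D$ principal submatrix nonsingular.

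My approach would be induction on $n$ (equivalently on $d$), handling the base case $d=0$ by the standard basis. For the inductive step, pick a vertex $v$; then $G' = G-v$ is $(d-1)$-connected, so by induction it has a GOR in $\RR^{D}$ (the same ambient dimension, since $(n-1)-(d-1) = n-d = D$). The non-neighbors $N^c(v)$ of $v$ number at most $n-1-d = D-1$, hence by general position their vectors span a subspace $U$ of codimension at least $1$ in $\RR^D$, so the set of legal attachments $\v_v \in U^\perp \setminus \{0\}$ is nonempty. The new OR automatically inherits general position on the first $n-1$ vectors; what must be verified is that $\v_v$ avoids the finite union of $(D-1)$-dimensional subspaces $\operatorname{span}(T)$ for $|T|=D-1$, $T \subseteq \{\v_1,\dots,\v_{n-1}\}$.

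The main obstacle is exactly this last step: $U^\perp$ may have dimension $1$, in which case there is essentially no freedom in choosing $\v_v$ and one cannot in general avoid the forbidden hyperplanes from a fixed inner GOR. The fix — and what I expect to be the non-probabilistic heart of the argument — is to carry out the induction not on a single GOR of $G'$ but on the whole parameter family: treat the pair (GOR of $G'$, compatible $\v_v$) as a point in a total space $\mathcal{F}$, and show via an algebraic dimension/irreducibility count (exploiting $d$-connectivity of $G$ to produce enough vertex-disjoint structure, presumably via a Menger-style decomposition) that the locus in $\mathcal{F}$ where the combined $n$-tuple fails general position is a proper subvariety. At that point a variety is not a finite union of proper subvarieties, so a GOR exists. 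The combinatorial input of $d$-connectivity enters precisely in showing that no single bad condition cuts out all of the relevant component — this is where, in LSS, a random choice made the argument short, and where this paper will need to substitute a structural observation.
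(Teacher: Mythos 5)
Your easy direction is correct and standard; the paper does not even write it out (it refers to it as ``straightforward''), but your dimension count is the right argument.

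For the hard direction you have correctly identified the obstacle --- adding the last vertex $v$ to a \emph{fixed} GOR of $G'=G-v$ leaves too little freedom (possibly a $1$-dimensional slot) to avoid the finitely many forbidden hyperplanes, so one must work with a whole parametrised family rather than a single witness --- but the mechanism you offer to close the induction is where the plan breaks down. Saying that the bad locus in the total space $\mathcal{F}$ ``is a proper subvariety'' via ``an algebraic dimension/irreducibility count'' and a ``Menger-style decomposition'' is a placeholder, not an argument. The difficulty is not merely a dimension count: for a fixed spanning set $T'\subseteq \{1,\dots,n-1\}$ of size $D-1$, you have to rule out the possibility that, over the \emph{entire} irreducible family of GORs of $G'$, the forced slot $U^\perp$ always lies inside $\operatorname{span}(T')$, and there is no obvious transversality or Menger-type argument that does this directly. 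This is precisely the step where Lov\'asz--Saks--Schrijver had to resort to randomness, and where the paper substitutes something with real content.

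What the paper actually does is rather different in organization. It fixes $n$ and $D$ and defines, for every vertex ordering $\sigma$, an irreducible parametrised semi-algebraic set $GOR^+_\sigma$ (the image of the one-vertex-at-a-time construction starting from the ordering $\sigma$). The heart of the proof is Proposition~\ref{prop:order}: for an $(n-D)$-connected $G$, \emph{any two} orderings give sets $GOR^+_\sigma$ and $GOR^+_\tau$ that are ``almost the same'' (each contains a dense subset of the other). This is proved by a double induction on adjacent transpositions, and it is exactly here that connectivity enters --- not via Menger, but via Lemma~\ref{lem:con}, which says that for $i\ge D$ the vertices $\sigma_i$ and $\sigma_{i+1}$ are joined by a path through earlier vertices, enabling an exchange argument. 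Once Proposition~\ref{prop:order} is in hand, the conclusion is easy: for each $D$-subset $I$, take the ordering $\tau$ that puts $I$ first, so $GOR^+_\tau\cap GP(I)$ is obviously nonempty (Lemma~\ref{lem:nonempt1}); ATS transfers this nonemptiness to any other $\sigma$; irreducibility (Lemma~\ref{lem:dense}) upgrades nonempty Zariski-open to dense; intersecting over the finitely many $I$ gives a nonempty $GOR$. So the missing ingredient in your plan is the comparison of \emph{all} vertex orderings via the ATS equivalence --- that is the structural replacement for randomness, and it is what lets one ``move $I$ to the front'' to handle each general-position condition in turn.
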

It is straightforward to establish
that a graph which is not $(n-D)$-connected cannot have a GOR in
$\RR^D$.
The difficult direction
of the proof is to show that any $(n-D)$-connected graph does have
a GOR in $\RR^D$.  In \cite{Lovasz-Schrijver,Lovasz-Schrijver2}
this step is done using the probabilistic method.  The authors
define a randomized algorithm for constructing an OR in $\RR^D$, and
show that, on an $(n-D)$-connected graph, the algorithm produces a
GOR with probability one.

In this note, we provide a novel ``structural'' proof for the existence of a GOR that does
not use the probabilistic method.  Our proof has a similar overall plan as the
one in \cite{Lovasz-Schrijver}, and uses many of those ideas.  The
novelty is that we substitute
probability statements with  topological ones.
Our proof is non-constructive, but it can be
used to justify the randomized algorithm described
in~\cite{Lovasz-Schrijver}.

A semi-algebraic set is \defn{irreducible} if its Zariski-closure is irreducible as an algebraic set.
In~\cite{Lovasz-Schrijver}, the authors follow up their main theorem with the following result.
\begin{theorem}
    \label{thm:irr1}
    $GOR$, if non-empty, is an irreducible semi-algebraic set.
\end{theorem}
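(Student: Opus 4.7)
The plan is to combine two properties: $(i)$ the algebraic set $OR$ is smooth at every $\v \in GOR$ (with locally constant dimension), and $(ii)$ $GOR$ is path-connected in the Euclidean topology. Given these, every $\v \in GOR$ lies on a unique irreducible component of $OR$, and by $(ii)$ this is the same component $C$ for all $\v \in GOR$. Since $GOR$ is Zariski-open in $OR$ (being the complement in $OR$ of the locus where some $D$ of the $\v_i$ are linearly dependent), $GOR \subseteq C$ is a nonempty Zariski-open subset of the irreducible variety $C$. Hence the Zariski closure of $GOR$ equals $C$, which is irreducible, proving $GOR$ is irreducible as a semi-algebraic set.

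For $(i)$, the Zariski tangent space to $OR$ at $\v = (\v_1, \ldots, \v_n) \in GOR$ consists of $\dot\v = (\dot\v_1, \ldots, \dot\v_n)$ satisfying $\iprod{\dot\v_i}{\v_j} + \iprod{\v_i}{\dot\v_j} = 0$ for every non-edge $ij$. The cokernel of this linear system may be identified with the space of ``equilibrium stresses'' $(c_{ij})$ on non-edges satisfying $\sum_{j \not\sim i} c_{ij} \v_j = \0$ at every vertex $i$. I would analyze this stress space using the general-position hypothesis; the goal is to show that its dimension is locally constant on $GOR$, so the Jacobian rank, and therefore $\dim OR$, is locally constant near every $GOR$ point, giving smoothness.

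For $(ii)$, I would construct paths between two $GOR$s by a sequence of single-vertex deformations. Fixing all but $\v_i$, the feasible configurations for $\v_i$ form an open subset of the linear subspace $L_i \subset \RR^D$ orthogonal to the vectors at non-neighbors of $i$; since $L_i$ is path-connected and general position is an open condition, $\v_i$ can be moved along generic paths inside $L_i$ while staying in $GOR$. The main obstacle is orchestrating such moves to land at a prescribed target $\w \in GOR$: after moving $\v_1$, the shifted subspace $L_2$ may no longer contain $\w_2$, so a naive vertex-by-vertex approach can fail. I anticipate resolving this by exploiting the $(n-D)$-connectivity of $G$ through Theorem~\ref{thm:main} to construct a global continuous interpolation rather than a strict sequential one. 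As a fallback, one can instead directly parametrize $GOR$ as the image of an irreducible algebraic variety (for instance, the input space of the LSS randomized algorithm, which is a tower of linear subspace bundles) and deduce irreducibility from the fact that the image of an irreducible set under a polynomial map has irreducible Zariski closure.
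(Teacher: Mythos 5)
Your primary route has a circularity problem. To establish path-connectedness of $GOR$ (your step (ii)), you invoke Theorem~\ref{thm:main}; but the paper's entire program is to prove Theorem~\ref{thm:irr1} \emph{first}, independently of Theorem~\ref{thm:main}, and then deduce Theorem~\ref{thm:main} from it via Proposition~\ref{prop:order}. So step (ii) may not use Theorem~\ref{thm:main}, and you have not proved it any other way --- you yourself flag the orchestration of per-vertex deformations as an unresolved obstacle. Step (i) is also only a sketch, though it does go through: for any $\v \in GOR$, each vertex $i$ has at most $D-1$ non-neighbors (otherwise $D$ of the $\v_j$ would all lie in $\v_i^\perp$, contradicting general position), so the non-neighbor vectors at each vertex are linearly independent, any stress must vanish, and the Jacobian of the defining equations of $OR$ has full rank. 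But (ii) is the load-bearing step, and it is missing.

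Your fallback is essentially the paper's argument, with one refinement worth flagging. The paper does not take as its parametrizing space the ``tower of linear subspace bundles'' underlying the LSS randomized algorithm --- as noted in Section 5, the fibers of that tower change dimension when preceding non-neighbor vectors degenerate, so it is not an obviously irreducible object. Instead, the paper constructs a determinant-based polynomial map $\phi : \RR^N \to \RR^{nD}$ whose image is $GOR^+_\sigma$, a set that \emph{strictly} contains $GOR$ (it includes degenerate configurations with zero vectors). Since $\RR^N$ is irreducible and $\phi$ is polynomial, $GOR^+_\sigma$ is an irreducible semi-algebraic set (Lemma~\ref{lem:irr2}); and since $GOR = GOR^+_\sigma \cap GP([n])$ is a Zariski-open subset of $GOR^+_\sigma$, when non-empty it has the same Zariski closure and is therefore irreducible. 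If you adopt the fallback, be explicit that it is $GOR^+_\sigma$, not $GOR$ itself, that is being parametrized, and that the passage to $GOR$ uses Zariski-openness.
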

Our approach is to take this result, which can be proven independently of
Theorem \ref{thm:main}, as the starting point.

\section{The \texorpdfstring{$GOR^+_\sigma$}{GOR plus sigma} construction}
To prove Theorem~\ref{thm:irr1}, we analyze the following process from
\cite[Proof of Theorem 2.1]{Lovasz-Schrijver}.
Fix an ordering $\sigma = \sigma_1\sigma_2\cdots\sigma_n$ of the vertices of $G$,
which will be the set $[n] = \{1, 2, \ldots, n\}$.
To make the description simpler,
we take, for the moment, $\sigma = 123\cdots (n-1)n$ to be the usual ordering on
$[n]$.  We construct a representation of $G$ inductively as follows.  For each $i=1, \ldots, n$:
\begin{itemize}
    \item Find the non-neighbors $1 \le i_1 \le i_2 \le \cdots \le i_k < i$ of $i$
          that come before it.
          (Note that there will be at most $D-1$ of these when $G$ is $(n-D)$-connected.)
    \item If the vectors $\v_{i_1}, \ldots, \v_{i_k}$ are linearly independent,
          place $\v_i$ anywhere in the $(D-k)$-dimensional subspace orthogonal to the linear span
          of these preceding non-neighboring vectors $\v_{i_j}$.
          Otherwise, set $\v_i = 0$.
\end{itemize}
This construction always produces an OR of $G$.

\begin{definition}
    Fix $G$ and $D$. For $i=1,\ldots, n$,
    we define $GOR^+_{\sigma,i}$
    to be the set of possible outputs of the above construction after $i$ steps.
    By convention, we define $GOR^+_{\sigma,0}$ to be $\{\bf 0\}$.
    We then define $GOR^+_{\sigma}$ to be $GOR^+_{\sigma,n}$,
    the set of possible outputs after all $n$ steps.
\end{definition}
The motivation for the construction above, as explained in
\cite[Page 447]{Lovasz-Schrijver}, is that
the set $GOR^+_{\sigma}$ is the image of a polynomial map.
At each stage $i$ of the construction, we have the, already chosen, vectors $\v_1, \ldots, \v_{i-1}$,
forming a point in $GOR^+_{\sigma,i-1}$.
Lovász--Saks--Schrijver describe
a (determinant-based) polynomial map
\[
    \phi_{i}: GOR^+_{\sigma,i-1} \times \RR^{N_{i}} \rightarrow \RR^{iD}
\]
with the property that:
\begin{itemize}
    \item The input in the $\RR^{N_i}$ factor are $N_i$ free parameters.
    \item The output consists of the input vectors $\v_1, \ldots, \v_{i-1}$
          from the first factor, together with
          a placement of $\v_i$ in the orthogonal complement of the span
          of the preceding non-neighbors $\v_{i_1}, \ldots, \v_{i_k}$ of the
          vertex $i$.
    \item The output for $\v_i$ is always the zero vector
          if $\v_{i_1}, \ldots, \v_{i_k}$
          are linearly dependent. Otherwise, by varying the second factor,
          every vector orthogonal to the span of $\v_{i_1}, \ldots, \v_{i_k}$ can
          be obtained.
\end{itemize}
Composing all the stages (noting that $GOR^+_{\sigma,0} = \{{\bf 0}\}$,
so we can get started), we obtain,  a map:
\[
    \phi := \phi_n \circ \cdots \circ \phi_1 : \RR^N \to \RR^{nD}
\]
where $N$ is the total number of parameters we needed.  The upshot is that
\[
    GOR^+_\sigma = \phi(\RR^N)
\]

Since $\RR^N$ is irreducible and $\phi$ polynomial,
it is immediate that:
\begin{lemma}
    \label{lem:irr2}
    $GOR^+_{\sigma}$ is an irreducible semi-algebraic set.
\end{lemma}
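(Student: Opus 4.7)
The plan is to read off the conclusion from the presentation $GOR^+_\sigma = \phi(\RR^N)$ constructed just above, using two standard facts from real algebraic geometry: one about images of semi-algebraic sets, and one about Zariski closures of images of irreducible varieties. Since the hard technical work of building the polynomial parametrization $\phi$ has already been done, what remains is essentially bookkeeping.

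First I would establish semi-algebraicity. By the Tarski--Seidenberg theorem, the image of a semi-algebraic set under a polynomial map is semi-algebraic; applying this to $\phi : \RR^N \to \RR^{nD}$ gives that $GOR^+_\sigma = \phi(\RR^N)$ is semi-algebraic.

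Next I would prove that the Zariski closure $V := \overline{GOR^+_\sigma}$ in $\RR^{nD}$ is irreducible as a real algebraic set. Here the key input is that $\RR^N$ is Zariski-irreducible, since its coordinate ring is the polynomial ring $\RR[x_1,\ldots,x_N]$, which is an integral domain. If $V = V_1 \cup V_2$ were a decomposition into proper Zariski-closed subsets, then $\RR^N = \phi^{-1}(V_1) \cup \phi^{-1}(V_2)$, and each $\phi^{-1}(V_i)$ is Zariski-closed because $\phi$ is polynomial. Irreducibility of $\RR^N$ forces $\phi^{-1}(V_i) = \RR^N$ for some $i$, which gives $\phi(\RR^N) \subseteq V_i$ and hence $V \subseteq V_i$, contradicting the properness of $V_i$. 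Combining the two steps yields the lemma.

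There is no genuine obstacle here: the content of the lemma is really the construction of $\phi$, which was already carried out in the paragraphs preceding the statement. The only minor care-point is noting that ``irreducible'' in Theorem~\ref{thm:irr1}, and hence in the lemma, refers to the Zariski closure over $\RR$, so the argument must be phrased in terms of real Zariski-closed sets rather than passing to $\CC$; but this causes no issue, as the preimage and closure arguments above are insensitive to the base field.
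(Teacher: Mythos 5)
Your proposal is correct and takes exactly the route the paper has in mind: Tarski--Seidenberg for semi-algebraicity of the image, and irreducibility of $\RR^N$ pushed forward through the polynomial map $\phi$ via the standard preimage-of-closed-sets argument. The paper simply states this as ``immediate'' without writing out the two steps you spell out.
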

In fact,  we can say more.
\begin{lemma}
    \label{lem:dense}
    If $U$ is a non-empty Zariski-open subset of $GOR^+_{\sigma}$, then
    $U$ is dense  in  the standard topology on $GOR^+_{\sigma}$.
\end{lemma}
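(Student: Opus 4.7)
The plan is to pull the open set $U$ back along $\phi$ to a non-empty Zariski-open subset of $\RR^N$, use the fact that such subsets of $\RR^N$ are standard-dense, and then push forward via the continuity of $\phi$.

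More concretely, suppose $U$ is a non-empty Zariski-open subset of $GOR^+_\sigma$. By definition, $U = GOR^+_\sigma \setminus Z$ for some Zariski-closed $Z \subseteq \RR^{nD}$, and since $\phi$ is polynomial, the preimage $\phi^{-1}(Z) \subseteq \RR^N$ is also Zariski-closed. Hence $V := \phi^{-1}(U) = \RR^N \setminus \phi^{-1}(Z)$ is Zariski-open in $\RR^N$. Because $U$ is non-empty and $\phi(\RR^N) = GOR^+_\sigma$, the set $V$ is non-empty as well.

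Next I would invoke the standard fact that a non-empty Zariski-open subset of $\RR^N$ is dense in the standard (Euclidean) topology. This follows because $\RR^N \setminus V$ is the real zero set of a non-zero polynomial, and such a set has empty Euclidean interior (equivalently, Lebesgue measure zero). Now, given any point $p \in GOR^+_\sigma$, pick some preimage $x \in \phi^{-1}(p) \subseteq \RR^N$. By the density just established, there is a sequence $x_k \to x$ in $\RR^N$ with $x_k \in V$. Applying $\phi$ and using its (Euclidean) continuity, $\phi(x_k) \to \phi(x) = p$, and each $\phi(x_k) \in U$. Thus $p$ lies in the standard-topological closure of $U$, which gives the claim.

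The argument is really just ``pullback $+$ continuity,'' so no step looks genuinely hard. The one point worth stating carefully is that Euclidean density of non-empty Zariski-opens in $\RR^N$ relies on the reality of the ambient space (i.e.\ on the absence of nonzero polynomials vanishing on a Euclidean-open set of $\RR^N$); I would flag this as the only non-formal input, but it is a textbook fact rather than an obstacle.
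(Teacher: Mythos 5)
Your proof is correct and takes essentially the same route as the paper's: pull $U$ back along the polynomial map $\phi$ to a non-empty Zariski-open subset of $\RR^N$, use that such sets are Euclidean-dense (their complements are proper real algebraic sets, hence nowhere dense), and push forward by continuity of $\phi$. The only cosmetic difference is that the paper works with a single polynomial $f$ cutting out the complement of $U$ rather than the full Zariski-closed set $Z$, which is immaterial.
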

This is a general, and relatively standard, fact about parameterized semi-algebraic
sets.  We give a proof for completeness.
\begin{proof}
    The complement of $U$
    in $GOR^+_{\sigma}$
    lies in the vanishing set of some polynomial $f$ that
    does not vanish identically on $GOR^+_{\sigma}$.  Pulling back $f$, we obtain a polynomial
    $f\circ \phi$ that does not vanish identically on $\RR^N$ . Let $V$ be the vanishing
    set of $f\circ \phi$.  As a proper algebraic subset of $\RR^N$, $V$ is nowhere dense
    in the standard topology on $\RR^N$.  Hence, $\phi(\RR^N\setminus V) \subseteq U$ is
    dense in the standard topology on  $GOR^+_{\sigma}$.
\end{proof}
For any ordering $\sigma$, we will have
\[
    GOR \subsetneq  GOR^+_{\sigma} \subseteq OR
\]
We can start from a GOR and play the process in reverse,
by forgetting the vertices one at a time, which gives the first
inclusion.
Meanwhile, every $GOR^+_\sigma$ is an OR, giving the second inclusion.
The first inclusion is strict,
since $GOR^+_{\sigma}$ contains ORs with some of the $\v_i$ equal to the
zero vector. The second inclusion is also strict as long as $G$
is not complete (a representation $\v$ in  $GOR^+_{\sigma}$ cannot have a non-zero
$\v_i$ with a previous non-neighbor set to ${\bf 0}$.)
In the rest of this note, we will deduce that, assuming appropriate
connectivity, $GOR$ is non-empty.

\section{Orderings}

Now we want to compare $GOR^+_{\sigma}$ to $GOR^+_{\tau}$, where $\tau$ is some
other ordering of $[n]$.

\begin{definition}
    We say that two subsets $A$ and $B$ of a topological space
    are \defn{almost the same (ATS)} if $A \cap B$ is dense in both $A$ and $B$.
\end{definition}
We will frequently use the fact that being ATS is an equivalence relation
on semi-algebraic sets, which we state here and prove in the appendix.
\begin{lemma}\label{lem: ats eqvrel}
    Let $A,B,C\subseteq \RR^D$ be semi-algebraic sets.  If $A$ and $B$ are ATS and $B$ and $C$
    are ATS, then $A$ and $C$ are ATS.
\end{lemma}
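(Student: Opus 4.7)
My plan is to deduce the transitivity of ATS from a \emph{fattening} lemma for semi-algebraic sets: any dense semi-algebraic subset of a semi-algebraic set contains a dense open semi-algebraic subset. Once the fattening lemma is in place, transitivity follows by a short formal argument.

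The fattening lemma I would prove is: if $X\subseteq B\subseteq\RR^D$ are semi-algebraic and $X$ is Euclidean-dense in $B$, then there is a semi-algebraic set $U\subseteq X$ which is open in $B$ (in the subspace topology) and dense in $B$. The natural candidate is $U := B\setminus \overline{B\setminus X}^{B}$, manifestly open in $B$ and contained in $X$; its density in $B$ is equivalent to the claim that the semi-algebraic set $Y := B\setminus X$, which has empty interior in $B$ by hypothesis, has a closure in $B$ that also has empty interior. I would prove this by contradiction, using the semi-algebraic cell decomposition. If some non-empty open $W\subseteq B$ were contained in $\overline{Y}^B$, then $Y$ would be dense in $W$; choosing inside $W$ a top-dimensional cell $W'$ that is semi-algebraically homeomorphic to an open subset of $\RR^{\dim W}$ and is itself open in $B$, the restriction $Y\cap W'$ becomes a semi-algebraic dense subset of a Euclidean open set, hence must have full dimension $\dim W$. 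A further cell decomposition of $Y\cap W'$ then produces a top-dimensional cell open in $\RR^{\dim W}$ and therefore open in $B$, contradicting the emptiness of the $B$-interior of $Y$.

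Assuming the fattening lemma, the transitivity is quick. Applied inside $B$ to the dense semi-algebraic sets $A\cap B$ and $B\cap C$, it yields semi-algebraic sets $U_1\subseteq A\cap B$ and $U_2\subseteq B\cap C$ that are open and dense in $B$. The intersection of two open dense subsets of any topological space is dense, so $U_1\cap U_2\subseteq A\cap B\cap C\subseteq A\cap C$ is dense in $B$, whence $B\subseteq \overline{A\cap C}$. From the chain $A\cap B\subseteq B\subseteq \overline{A\cap C}$ one obtains $\overline{A\cap B}\subseteq \overline{A\cap C}$; combined with $A\subseteq \overline{A\cap B}$ (the ATS hypothesis), this gives $A\subseteq \overline{A\cap C}$. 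The symmetric argument, with the roles of $A$ and $C$ interchanged, produces $C\subseteq \overline{A\cap C}$.

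The main obstacle is the fattening lemma itself. Transitivity of ATS fails topologically in general---for example $A=\QQ\cap(0,1)$, $B=(0,1)$, $C=((0,1)\setminus\QQ)\cup\{1/2\}$ satisfy $A$ ATS $B$ and $B$ ATS $C$, but $A\cap C=\{1/2\}$ is dense in neither $A$ nor $C$---so the semi-algebraic hypothesis must be used essentially somewhere. The cell-decomposition step is precisely where it is used: it rules out dense subsets that are ``meagre like $\QQ\subseteq\RR$''.
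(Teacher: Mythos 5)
Your proposal matches the paper's proof in both structure and key ingredient: the paper also reduces to a ``fattening'' lemma (its Lemma~\ref{lem:denseOpen}, which states that a semi-algebraic dense subset contains a semi-algebraic open dense subset), derives from it exactly the intersection-of-dense-sets statement you use (its Lemma~\ref{lem:sadense}), and then finishes with the same short transitivity chase through $A\cap B\cap C$. The only divergence is internal to the fattening lemma, where the paper builds the open dense subset bottom-up from a stratification (discard strata in the closure of others, intersect with $A$, keep top-dimensional pieces), whereas you take $U = B\setminus\overline{B\setminus X}^{B}$ directly and prove its density by contradiction using cell decomposition; both are standard and essentially equivalent in effort, so this is the same approach with a cosmetically different execution of one step.
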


The main proposition in this section is:
\begin{proposition}
    \label{prop:order}
    If $G$ is $(n-D)$-connected, and $\sigma$ and $\tau$ are two vertex orderings,
    then $GOR^+_{\sigma}$  and $GOR^+_{\tau}$ are ATS.
\end{proposition}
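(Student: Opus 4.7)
Since ATS is an equivalence relation on semi-algebraic sets (Lemma~\ref{lem: ats eqvrel}) and any two orderings of $[n]$ are connected by a sequence of adjacent transpositions, I reduce to the case where $\sigma$ and $\tau$ differ only by swapping positions $i$ and $i+1$. Write $u = \sigma_i = \tau_{i+1}$, $w = \sigma_{i+1} = \tau_i$, and $P = \{\sigma_1,\ldots,\sigma_{i-1}\}$; for any vertex $v$ and $Q\subseteq [n]$ let $N_v^Q := Q \cap \overline{N(v)}$, and write $N_v^\sigma$ for the preceding non-neighbors of $v$ in the ordering $\sigma$. When $uw \in E(G)$, a direct check shows that the two processes impose identical constraints at positions $i$ and $i+1$ (in $\tau$, at position $i+1$ the placement of $\v_u$ sees $N_u^{P \cup \{w\}} = N_u^P$ because $w$ is a neighbor of $u$; analogously for $\v_w$), so $GOR^+_\sigma = GOR^+_\tau$ as sets---stronger than ATS.

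The substantial case is $uw \notin E(G)$. Here the plan is to exhibit a common Zariski-open subset $W \subseteq GOR^+_\sigma \cap GOR^+_\tau$ that is dense in each in the standard topology. Define
\[
W := \{ \v \in OR : \v_{N_v^\sigma} \text{ and } \v_{N_v^\tau} \text{ are linearly independent for every vertex } v\},
\]
a Zariski-open condition in $\RR^{nD}$. For $v \notin \{u,w\}$ the two non-neighbor sets agree, and at $v \in \{u,w\}$ the combined conditions reduce to linear independence of $\v_{N_u^P \cup \{w\}}$ and of $\v_{N_w^P \cup \{u\}}$. Since $G$ is $(n-D)$-connected and $u,w$ are non-adjacent, $|N_u^P|, |N_w^P| \le D-2$, so every set in a condition has at most $D-1$ vectors, and the conditions are non-vacuous in $\RR^D$. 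The inclusion $W \subseteq GOR^+_\sigma \cap GOR^+_\tau$ holds because on $W$ each process always finds its preceding non-neighbor set to be linearly independent and hence never takes its ``$\v_i := 0$'' branch; the produced output is $\v$ itself.

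It remains to show $W \neq \emptyset$, which I expect to be the main obstacle. The plan is to build an element of $W$ by running $\phi_\sigma$ with generic parameter choices. At step $j$, the vector $\v_{\sigma_j}$ is placed in an orthogonal-complement subspace of dimension at least $D-(D-1)=1$; each of the finitely many linear-independence conditions in $W$ that is ``closed off'' by the placement of $\v_{\sigma_j}$ (i.e., whose defining set lies in $\{\sigma_1,\ldots,\sigma_j\}$) excludes only a proper subspace of this choice space, provided the earlier vectors were chosen in sufficiently generic relative position---a property that is maintained inductively. A generic choice then avoids every bad subspace at every step, producing an element of $W$. Once $W \neq \emptyset$, Lemma~\ref{lem:dense} gives that $W$ is dense in the standard topology of $GOR^+_\sigma$ and, by symmetry, in $GOR^+_\tau$, completing the proof of ATS.
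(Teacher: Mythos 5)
Your overall plan — reduce to adjacent transpositions, dispatch the edge case by a direct equality of sets, and then for the non-edge case exhibit a single Zariski-open subset $W$ that is dense in both $GOR^+_\sigma$ and $GOR^+_\tau$ — is genuinely different from the paper's argument, which never attempts to construct a common dense subset directly. Instead, the paper runs a double induction: an outer induction on the agreement prefix of $\sigma$ and $\tau$ (statement~\eqref{eq: IH}), and an inner induction (Lemma~\ref{lem:later}) on the length of a path in $G$ joining $\sigma_i$ to $\sigma_{i+1}$ through earlier vertices, chaining together edge-swaps (Lemma~\ref{lem:edgeCon}) and early rearrangements. Several pieces of your proposal are correct and clean: the reduction via Lemma~\ref{lem: ats eqvrel}, the edge case (which matches Lemma~\ref{lem:edgeCon}), the inclusion $W \subseteq GOR^+_\sigma \cap GOR^+_\tau$ (an OR satisfying the stated independence conditions is a legal run of either process with the zero branch never taken), the observation that $W$ is a Zariski-open subset of $GOR^+_\sigma$ and of $GOR^+_\tau$, and the deduction via Lemma~\ref{lem:dense} that $W\neq\emptyset$ implies density.

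The gap is the claim $W\neq\emptyset$, and it is not a small one. Your sketch --- run $\phi_\sigma$ with ``generic'' parameters, and at each step the bad placements form proper subspaces ``provided the earlier vectors were chosen in sufficiently generic relative position, a property maintained inductively'' --- is circular: the inductive invariant is never stated, and establishing that some such invariant is both maintainable and sufficient is exactly the hard content of the theorem. The specific failure mode is that at step $j$ the admissible set $L_j = \mathrm{span}(\v_{N^\sigma_{\sigma_j}})^\perp$ can be $1$-dimensional (when $|N^\sigma_{\sigma_j}| = D-1$), so there is essentially no freedom in placing $\v_{\sigma_j}$; a later independence requirement involving $\sigma_j$ excludes $L_j\cap \mathrm{span}(\v_S)$ for some set $S$ of earlier vectors, and nothing in your argument rules out $L_j \subseteq \mathrm{span}(\v_S)$, in which case the bad set is all of $L_j$, not a proper subspace. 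Whether that containment can be avoided depends on global properties of the earlier placements, which is precisely what the Lov\'asz--Saks--Schrijver probabilistic process and the paper's structural induction are both engineered to control. A symptom of the missing idea: you use the connectivity hypothesis only through the counting bound $|N^P_u| \le D-2$, whereas the paper uses connectivity to produce a \emph{path} in $G$ from $\sigma_i$ to $\sigma_{i+1}$ through $\{\sigma_1,\ldots,\sigma_{i-1}\}$ (Lemma~\ref{lem:con}), which is a strictly stronger consequence and is what drives the inner induction. Without something playing that role, the non-emptiness of $W$ is roughly as hard as the theorem itself.
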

The rest of the section will be occupied with the proof, which is an induction on the statement
\begin{equation}\label{eq: IH} \tag{$\star$}
    \text{{\it For all $\sigma$ and $\tau$ such that, $\sigma_j = \tau_j = j$ for all $i < j\le n$,
    $GOR^+_{\sigma}$  and $GOR^+_{\tau}$ are ATS.}
    }
\end{equation}

\subsection{Base case 1}
To simplify things, let us first assume that we have a graph $G^-$ with $D$ vertices, and a
vertex ordering $\sigma$, and we are looking for GORs in $\RR^D$.
\begin{lemma}
    \label{lem:hasGOR}
    For $G^-$, a graph with $D$ vertices,  $GOR$ is dense in $GOR^+_\sigma$.
\end{lemma}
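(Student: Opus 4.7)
The plan is to realize $GOR$ as a non-empty Zariski open subset of $GOR^+_\sigma$ and then appeal to Lemma \ref{lem:dense} to upgrade non-emptiness to density.

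First I would observe that when the graph $G^-$ has exactly $D$ vertices, the general position condition collapses: a representation $(\v_1, \ldots, \v_D)$ in $\RR^D$ is a GOR if and only if its $D$ vectors are linearly independent, i.e., $\det(\v_1, \ldots, \v_D) \ne 0$. Combined with the inclusion $GOR \subseteq GOR^+_\sigma$ noted in the excerpt, this identifies $GOR$ with the Zariski open subset of $GOR^+_\sigma$ cut out by $\det \ne 0$.

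Next, to show this open subset is non-empty, I would run the $GOR^+_\sigma$ construction making an orthogonal choice at each step. Pick $\v_{\sigma_1}$ to be any nonzero vector in $\RR^D$. At step $i$, once $\v_{\sigma_1}, \ldots, \v_{\sigma_{i-1}}$ have been chosen to be pairwise orthogonal and nonzero, let $W$ be their span (which has dimension $i-1 < D$), and pick $\v_{\sigma_i}$ to be any nonzero vector in $W^\perp$. Since the span $S$ of the preceding non-neighbors of $\sigma_i$ is contained in $W$, we have $W^\perp \subseteq S^\perp$, so this choice is admissible within the $GOR^+_\sigma$ process. After $D$ steps we obtain $D$ pairwise orthogonal nonzero vectors, hence a basis of $\RR^D$, which is a point of $GOR$ that lies in $GOR^+_\sigma$.

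With non-emptiness established, Lemma \ref{lem:dense} immediately gives density of $GOR$ in $GOR^+_\sigma$. I do not foresee a substantive obstacle: the only thing that requires any care is verifying that the orthogonal (Gram--Schmidt style) construction is a legal run of the $GOR^+_\sigma$ process, and this follows at once from the inclusion $S \subseteq W$.
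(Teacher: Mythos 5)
Your proof is correct and takes essentially the same route as the paper's: realize $GOR$ as a non-empty Zariski-open subset of $GOR^+_\sigma$ (the locus $\det\ne 0$) and invoke Lemma~\ref{lem:dense} to get density. The paper witnesses non-emptiness simply by taking the elementary basis vectors $\e_1,\ldots,\e_D$, which is a concrete instance of your Gram--Schmidt-style run of the process.
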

\begin{proof}
    $GOR$ is a Zariski open  subset of $GOR^+_{\sigma}$.  Since we only have $D$ vertices, there must
    be a GOR in $\RR^D$ (as we can just use the elementary vectors), and thus $GOR$ is non-empty. Thus from
    Lemma~\ref{lem:dense} it is a dense subset.
\end{proof}
Since the above lemma did not depend
on the ordering, using $GOR$ as the common dense subset, we get:
\begin{lemma}
    \label{lem:ats1}
    For $G^-$, a graph with $D$ vertices, and $\sigma$ and $\tau$ two vertex orderings,
    $GOR^+_\sigma$ and $GOR^+_\tau$ are ATS.
\end{lemma}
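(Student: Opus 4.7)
The plan is to use Lemma~\ref{lem:hasGOR} twice, with $GOR$ itself serving as the common dense subset that witnesses the ATS relation. The crucial point is that the set $GOR$ of general position orthogonal representations of $G^-$ in $\RR^D$ is defined purely in terms of $G^-$ and $D$ — it carries no dependence on any vertex ordering.

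First I would recall that for any ordering $\sigma$, we have $GOR \subseteq GOR^+_\sigma$ (this is the first of the two inclusions noted in the excerpt; for $G^-$ with $D$ vertices the connectivity hypothesis is vacuous since any graph is $0$-connected). Combined with Lemma~\ref{lem:hasGOR}, this gives the stronger statement that $GOR$ is a dense subset of $GOR^+_\sigma$, and the analogous statement holds with $\tau$ in place of $\sigma$.

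Now fix two orderings $\sigma$ and $\tau$. Since $GOR \subseteq GOR^+_\sigma$ and $GOR \subseteq GOR^+_\tau$, we have $GOR \subseteq GOR^+_\sigma \cap GOR^+_\tau$. Because $GOR$ is dense in $GOR^+_\sigma$, any superset of $GOR$ inside $GOR^+_\sigma$ is also dense there; in particular $GOR^+_\sigma \cap GOR^+_\tau$ is dense in $GOR^+_\sigma$. The symmetric argument shows it is dense in $GOR^+_\tau$ as well. By the definition of ATS, this gives the lemma.

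There is no real obstacle to overcome at this stage: all the work has already been done in Lemma~\ref{lem:hasGOR}, where the existence of a GOR for $G^-$ in $\RR^D$ was obtained for free (using the standard basis) and promoted to density via Lemma~\ref{lem:dense}. The reason this argument does not immediately give Proposition~\ref{prop:order} in general is that for larger graphs we do not yet know $GOR$ is non-empty — that is precisely what the inductive scheme based on \eqref{eq: IH} will eventually establish — so the base case has to be isolated to the trivial regime $n = D$.
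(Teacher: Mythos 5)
Your proof is correct and matches the paper's own argument essentially verbatim: both use $GOR$ as the ordering-independent common dense subset supplied by Lemma~\ref{lem:hasGOR}. The paper leaves this as a one-line remark; you have simply written out the two density checks that make the ATS conclusion explicit.
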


Now we want to upgrade this to a graph $G$ with $n$ vertices, giving us
our first base case.
\begin{lemma}\label{lem: base 1}
    Let $G$ be a graph with $n$ vertices, and $\sigma$ and $\tau$ two vertex orderings such that
    \[
        \sigma_j = \tau_j\qquad \text{for all $D < j \le n$}
    \]
    Then $GOR^+_\sigma$ and $GOR^+_\tau$ are ATS.
\end{lemma}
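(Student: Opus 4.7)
The plan is to produce a single set $U$ that lies inside both $GOR^+_\sigma$ and $GOR^+_\tau$ and is dense in each. Let $V^- = \{\sigma_1,\dots,\sigma_D\} = \{\tau_1,\dots,\tau_D\}$ be the common set of first-$D$ vertices (well-defined because $\sigma$ and $\tau$ agree past position $D$), and let $G^-$ be the induced subgraph on $V^-$. For $\rho \in \{\sigma,\tau\}$, I would define
\[
    U_\rho = \{\v \in GOR^+_\rho : \v|_{V^-} \in GOR(G^-)\}.
\]

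First I would check that each $U_\rho$ is a non-empty Zariski-open subset of $GOR^+_\rho$. Openness is immediate because ``$\v|_{V^-}$ in general linear position'' is a Zariski-open condition. For non-emptiness, Lemma~\ref{lem:hasGOR} tells me that $GOR(G^-)$ is non-empty (the elementary basis works); I can then take any $\v_0 \in GOR(G^-)$, place it as the first $D$ vectors, and run the remaining maps $\phi_{D+1},\dots,\phi_n$ of the $\rho$-process with arbitrary parameters to obtain an element of $U_\rho$. Lemma~\ref{lem:dense} then upgrades $U_\rho$ to a dense subset of $GOR^+_\rho$ in the standard topology.

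The heart of the argument is to show $U_\sigma = U_\tau$. Suppose $\v \in U_\sigma$. Its restriction $\v|_{V^-}$ lies in $GOR(G^-) \subseteq GOR^+_{\tau|_D}(G^-) = GOR^+_{\tau,D}$, by the inclusion $GOR \subseteq GOR^+$ recorded in Section~2, so some choice of initial $\tau$-process parameters produces $\v|_{V^-}$ after $D$ steps. For any $i > D$, the current vertex $\sigma_i = \tau_i$ is the same and the set of already-placed vertices $\{\sigma_1,\dots,\sigma_{i-1}\} = V^- \cup \{\sigma_{D+1},\dots,\sigma_{i-1}\} = \{\tau_1,\dots,\tau_{i-1}\}$ coincides; so the ``previous non-neighbors'' of $\sigma_i$ and the vectors currently assigned to them are identical in the two processes. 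Hence the rule applied at step $i$---linear-dependence check, and free placement in the orthogonal complement---behaves identically, and I can reuse the tail parameters that the $\sigma$-process used to reach $\v$ to make the $\tau$-process output exactly $\v$. Thus $\v \in U_\tau$, and by symmetry $U_\sigma = U_\tau$.

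Since $U_\sigma = U_\tau \subseteq GOR^+_\sigma \cap GOR^+_\tau$ is dense in both, the two sets are ATS. The one real obstacle I anticipate is the identification of the two processes after step $D$: one has to observe carefully that the construction at step $i > D$ depends only on the unordered collection of previously placed vectors and on the current vertex, not on the history of how those vectors were placed, so rearranging the first $D$ positions of the ordering has no downstream effect once the first $D$ vectors agree.
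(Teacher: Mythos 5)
Your proof is correct and follows essentially the same route as the paper's: both exploit that the construction past step $D$ is the same map $\psi$ for $\sigma$ and $\tau$, and both trace denseness back to $GOR(G^-)$ being dense in $GOR^+_{\,\cdot\,}(G^-)$ (the paper's Lemma~\ref{lem:hasGOR}). The only stylistic difference is that you inline the paper's Lemma~\ref{lem:ats1} by exhibiting a single common dense set $U_\sigma = U_\tau$ (the $\psi$-image of $GOR(G^-)\times\RR^N$) rather than first establishing ATS for $G^-$ and then pushing forward through $\psi$; the key observation that $\phi_i$ at step $i>D$ depends only on which vertex carries which vector (so $\psi$ is literally the same map for $\sigma$ and $\tau$) is exactly what the paper uses to write $\psi = \phi_{\sigma_n}\circ\cdots\circ\phi_{\sigma_{D+1}} = \phi_{\tau_n}\circ\cdots\circ\phi_{\tau_{D+1}}$.
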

\begin{proof}
    Let $G^-$ be the subgraph of $G$ induced by the first $D$ vertices in the orderings $\sigma$ and $\tau$.
    This gives us the two sets
    $GOR^+_\sigma(G^-)$ and $GOR^+_\tau(G^-)$ that are ATS from Lemma~\ref{lem:ats1}.

    Next we move back to the original graph $G$.
    Consider the polynomial map
    $\psi: \RR^{D^2} \times \RR^{N} \rightarrow \RR^{nD}$
    defined by
    \[
        \psi = \phi_{\sigma_n} \circ \cdots \circ \phi_{\sigma_{D+1}} =  \phi_{\tau_n} \circ \cdots \circ \phi_{\tau_{D+1}}
    \]
    where we have used that $\sigma$ and $\tau$ are the same on their last $n-D$ elements.
    We interpret the $\RR^{D^2}$ factor as a representation of the $D$ vertices of $G$
    that come first in the orderings $\sigma$ and $\tau$,
    and the $\RR^N$ factor as the remaining free variables corresponding to free parameters.

    The image  $\psi(GOR^+_\sigma(G^-) \times \RR^{N})$ is
    $GOR^+_\sigma(G)$,
    while
    the image  $\psi(GOR^+_{\tau}(G^-) \times \RR^{N})$ is
    $GOR^+_\tau(G)$.
    Since $GOR^+_\sigma(G^-)$ and $GOR^+_{\tau}(G^-)$  are ATS, so too are
    $GOR^+_\sigma(G^-) \times \RR^N$ and
    $GOR^+_{\tau}(G^-) \times \RR^N$.
    Thus,
    \[
        \psi((GOR^+_\sigma(G^-) \times \RR^N)\cap (GOR^+_{\tau}(G^-) \times \RR^N))
    \]
    is dense in both images $GOR^+_\sigma(G)$ and
    $GOR^+_\tau(G)$ under the continuous map $\psi$, making them ATS.
\end{proof}

\subsection{Base Case 2}
The second base case of interest is the following
\begin{lemma}
    \label{lem:edgeCon}
    Suppose that $\sigma$ and $\tau$ are two orderings on  $[n]$ so that
    \[
        \sigma_j = \tau_j\quad \text{for all $j\in [n] \setminus \{i,i+1\}$} \qquad \sigma_{i+1} = \tau_i \qquad \sigma_i = \tau_{i+1}
    \]
    and that $\{\sigma_i,\sigma_{i+1}\}$ is an edge of $G$.  Then $GOR^+_\sigma = GOR^+_\tau$.
\end{lemma}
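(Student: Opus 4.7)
The plan is to unwind the definition of the construction and observe that the edge assumption on $\{\sigma_i,\sigma_{i+1}\}$ makes the placements of those two vectors genuinely independent of each other, so swapping them changes nothing. Write $u = \sigma_i = \tau_{i+1}$ and $w = \sigma_{i+1} = \tau_i$, and let $P = \{\sigma_1,\ldots,\sigma_{i-1}\} = \{\tau_1,\ldots,\tau_{i-1}\}$. Because $\sigma$ and $\tau$ agree on the first $i-1$ positions, the composition $\phi_{i-1}\circ\cdots\circ\phi_1$ is literally the same polynomial map in both orderings, so $GOR^+_{\sigma,i-1} = GOR^+_{\tau,i-1}$, and any realization of the vectors indexed by $P$ is achievable in exactly the same way.

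Next I would compare the two steps that differ. In ordering $\sigma$, the construction at step $i$ places $\v_u$ in the orthogonal complement of the span of $\{\v_x : x \in P,\ x \not\sim u\}$, and then at step $i+1$ places $\v_w$ using the span of $\{\v_x : x \in P \cup \{u\},\ x \not\sim w\}$. In ordering $\tau$, the roles are reversed: step $i$ places $\v_w$ using $\{\v_x : x \in P,\ x \not\sim w\}$ and step $i+1$ places $\v_u$ using $\{\v_x : x \in P \cup \{w\},\ x \not\sim u\}$. Because $\{u,w\}$ is an edge of $G$, $u$ is a neighbor of $w$ and vice versa, so neither $u$ nor $w$ ever contributes to the other's previous‑non‑neighbor set. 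In other words, the previous non-neighbors of $u$ are the same set $\{x \in P : x \not\sim u\}$ in both orderings, and likewise for $w$.

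Consequently, at steps $i$ and $i+1$ the two orderings present the same pair of ``placement rules''---one for $\v_u$ and one for $\v_w$---depending only on vectors indexed by $P$. Since the rule for $\v_u$ does not reference $\v_w$ and vice versa, the set of admissible pairs $(\v_u, \v_w)$ produced from any fixed realization in $GOR^+_{\sigma,i-1}$ is precisely the same Cartesian product in both orderings, regardless of which of the two is placed first. Finally, positions $i+2,\ldots,n$ of $\sigma$ and $\tau$ agree, so the remaining maps $\phi_n\circ\cdots\circ\phi_{i+2}$ are identical. Composing everything, the images $GOR^+_\sigma$ and $GOR^+_\tau$ coincide as sets, not just up to ATS.

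There is no real obstacle here; the entire content is the bookkeeping observation that the edge assumption decouples the two placements. The only point that needs to be stated carefully is that $\phi_i$ uses \emph{previous non-neighbors}, so being an edge (rather than a non-edge) is exactly what guarantees the mutual invisibility of $u$ and $w$.
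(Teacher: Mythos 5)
Your proof is correct and follows the same approach as the paper: the edge hypothesis means each of $\sigma_i$, $\sigma_{i+1}$ is a neighbor of the other, so neither appears in the other's set of preceding non-neighbors, making the two placements independent of one another; the paper's proof states this in one sentence, and you have simply spelled out the bookkeeping behind it.
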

\begin{proof}
    When we run the process using either $\sigma$
    or $\tau$, the constraints on the placements of the $i$th and $i+1$st
    vertices
    will be independent
    of the placement of the other, and the
    rest of the vertex ordering is shared by $\sigma$ and $\tau$.
\end{proof}

\subsection{Inner induction}
We start with a special case of the inductive step.  It is here we use
the connectivity hypothesis.
\begin{lemma}
    \label{lem:con}
    If $G$ is $(n-D)$-connected and $i \ge D$, then  $\sigma_i$ and $\sigma_{i+1}$
    are connected by a path that visits only vertices in  $\{\sigma_1, \ldots, \sigma_{i+1}\}$.
\end{lemma}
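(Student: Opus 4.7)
The plan is to apply the definition of $(n-D)$-connectivity directly to the induced subgraph on the ``first $i+1$'' vertices of the ordering. Recall that a graph is $(n-D)$-connected precisely when removing any set of fewer than $n-D$ vertices leaves a connected graph.

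First I would count: the complement of $\{\sigma_1, \ldots, \sigma_{i+1}\}$ in $[n]$ is $\{\sigma_{i+2}, \ldots, \sigma_n\}$, a set of size $n - i - 1$. The hypothesis $i \ge D$ gives $n - i - 1 \le n - D - 1 < n - D$, so this set is small enough that removing it from $G$ cannot disconnect the graph. In particular, the induced subgraph $G[\{\sigma_1, \ldots, \sigma_{i+1}\}]$ is connected, and therefore contains a path from $\sigma_i$ to $\sigma_{i+1}$; such a path visits only vertices in $\{\sigma_1, \ldots, \sigma_{i+1}\}$, as required.

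There is no serious obstacle here: the lemma is essentially a bookkeeping consequence of the connectivity hypothesis, and the only thing that must be checked carefully is the bound $n - i - 1 < n - D$ to confirm that the correct inequality $i \ge D$ (and not, say, $i \ge D + 1$) is what the later induction needs. This is also the unique step in the proposition's argument that uses $(n-D)$-connectivity, which matches the remark preceding the lemma.
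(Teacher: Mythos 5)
Your proof is correct and is essentially the paper's argument, just stated directly rather than as a contrapositive: you observe that deleting the $n-i-1 < n-D$ vertices $\{\sigma_{i+2},\ldots,\sigma_n\}$ cannot disconnect $G$, so the induced subgraph on $\{\sigma_1,\ldots,\sigma_{i+1}\}$ is connected and contains the required path; the paper instead assumes no such path and concludes $\{\sigma_{i+2},\ldots,\sigma_n\}$ would be a cut set, contradicting $(n-D)$-connectivity. Same counting, same use of the hypothesis.
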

\begin{proof}
    If every path from $\sigma_i$ to $\sigma_{i+1}$ visits some $\sigma_{j}$ with $j > i + 1$,
    then $\{\sigma_{i+2}, \ldots, \sigma_n\}$ is a cut set.  Since $i \ge D$, this would contradict
    $(n-D)$-connectivity.
\end{proof}
Now we can deal with a special case of the inductive step.
\begin{lemma}
    \label{lem:later}
    Suppose that $G$ is $(n-D)$ connected. Let $D\le i < n$ be fixed and suppose that the
    inductive hypothesis \eqref{eq: IH} holds for $i$.  If
    \[
        \sigma_j = \tau_j\quad \text{for all $j\in [n] \setminus \{i,i+1\}$} \qquad \sigma_{i+1} = \tau_i \qquad \sigma_i = \tau_{i+1}
    \]

    then $GOR^+_\sigma$ and $GOR^+_\tau$ are ATS.
\end{lemma}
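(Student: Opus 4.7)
The plan is to construct a chain of orderings interpolating between $\sigma$ and $\tau$ in which each consecutive pair is provably ATS, and then to close the chain using transitivity (Lemma~\ref{lem: ats eqvrel}). The single-step moves available are: (a) Lemma~\ref{lem:edgeCon}, which gives \emph{equality} of $GOR^+$ whenever we transpose positions $i, i+1$ whose entries span an edge of $G$; and (b) the inductive hypothesis~\eqref{eq: IH}, which (after relabeling vertices) tells us that any two orderings agreeing on positions $i+1, \ldots, n$ have ATS $GOR^+$ sets. Lemma~\ref{lem:con} bridges the two by supplying a path $u_0 = \sigma_i, u_1, \ldots, u_\ell = \sigma_{i+1}$ in $G$ whose interior vertices all lie in $\{\sigma_1, \ldots, \sigma_{i-1}\}$.

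My idea is to walk position $i+1$ from $u_\ell$ down to $u_0$ one path-step at a time. I would define orderings $\alpha^{(0)}, \alpha^{(1)}, \ldots, \alpha^{(\ell)}$ with $\alpha^{(k)}_{i+1} = u_{\ell-k}$, $\alpha^{(k)}_j = \sigma_j$ for $j > i+1$, and $\alpha^{(0)} = \sigma$. To pass from $\alpha^{(k-1)}$ to $\alpha^{(k)}$ I would first apply~\eqref{eq: IH} to rearrange the first $i$ positions of $\alpha^{(k-1)}$ so that $u_{\ell-k}$ sits at position $i$; this is legitimate because positions $i+1, \ldots, n$ are untouched, and $u_{\ell-k}$ genuinely appears among positions $1, \ldots, i$ of $\alpha^{(k-1)}$ since every path vertex lies in $\{\sigma_1, \ldots, \sigma_{i+1}\}$ and the one currently occupying position $i+1$ is $u_{\ell-k+1} \ne u_{\ell-k}$. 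Positions $i, i+1$ now hold the edge $\{u_{\ell-k}, u_{\ell-k+1}\}$, so Lemma~\ref{lem:edgeCon} transposes them to reach $\alpha^{(k)}$. Finally, $\alpha^{(\ell)}$ has $u_0$ at position $i+1$, matching $\tau$ on all positions $\ge i+1$, and one further application of~\eqref{eq: IH} completes the chain.

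The main difficulty I anticipate is combinatorial rather than conceptual: one must check at every rearrangement step that the desired path vertex is currently placed in positions $1, \ldots, i$ (which is exactly why Lemma~\ref{lem:con}'s containment of the path inside $\{\sigma_1, \ldots, \sigma_{i+1}\}$ is essential), and that each transposition is applied to an ordering whose $i$th and $(i{+}1)$st entries really are adjacent on the path. The edge case $\ell = 1$, in which $\{\sigma_i, \sigma_{i+1}\}$ is already an edge of $G$, is handled automatically: the construction collapses to a single application of Lemma~\ref{lem:edgeCon} bracketed by trivial IH steps.
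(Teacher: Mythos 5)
Your proof is correct, and it takes a genuinely different route than the paper's. The paper runs a \emph{nested} induction: given the outer hypothesis \eqref{eq: IH} at level $i$, it proves the lemma by a further induction on $k$, the number of interior path vertices, and the inductive step consists of a fixed choreography of five exchanges (early/early/early occupying three of them, one edge-transposition at positions $i,i{+}1$, and one invocation of the inner hypothesis at $k$). You instead dispense with the inner induction entirely and build a single explicit chain of orderings: you walk the vertex sitting at position $i{+}1$ along the path $u_\ell, u_{\ell-1}, \ldots, u_0$ one edge at a time, with each hop realized as an outer-IH rearrangement of positions $1,\ldots,i$ (to park the next path vertex $u_{\ell-k}$ at slot $i$) followed by an edge-transposition via Lemma~\ref{lem:edgeCon}, and then Lemma~\ref{lem: ats eqvrel} closes the chain. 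The bookkeeping you flagged is exactly the right thing to worry about and it does check out: every $u_{\ell-k}$ that you need to drag into position $i$ lies in $\{\sigma_1,\ldots,\sigma_{i+1}\}$ by Lemma~\ref{lem:con}, and it is never the vertex currently at position $i{+}1$ because consecutive path vertices are distinct, so it is always among the first $i$ slots and the early rearrangement is legal. Both proofs rest on the same three ingredients (\eqref{eq: IH}, Lemma~\ref{lem:edgeCon}, Lemma~\ref{lem:con}) glued by ATS transitivity; what your version buys is the elimination of one layer of induction, at the mild cost of spelling out the chain $\alpha^{(0)},\ldots,\alpha^{(\ell)}$ and its invariants (positions $>i{+}1$ untouched, multiset of symbols in positions $\le i{+}1$ preserved) directly rather than letting the recursion maintain them implicitly.
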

\begin{proof}
    By Lemma \ref{lem:con}, there is a path of length
    $m+1$ for some $m\ge 0$:
    \[
        \sigma_i, \sigma_{j_1}, \ldots, \sigma_{j_m}, \sigma_{i+1}
    \]
    in $G$, where for each $\ell$, $1\le j_\ell\le i-1$.  The proof is by induction
    on $k$, which is an upper bound on the
    number of interior vertices
    in the shortest path
    between the $i$th and $(i+1)$st vertex in an ordering.  (These
    path endpoints are
    $\sigma_i$ and $\sigma_{i+1}$ in $\sigma$, but will be other
    vertices as we consider different orderings in the proof.)

    The base case of $k=0$ is Lemma \ref{lem:edgeCon}.
    Now suppose that the statement holds for some $k$ and assume that
    $\sigma_i$ is connected to $\sigma_{i+1}$ by a shortest path
    \[
        \sigma_i, \sigma_{j_1}, \ldots, \sigma_{j_{k+1}}, \sigma_{i+1}
    \]
    with $1\le j_\ell \le i-1$ for each $\ell$.  We now
    proceed as in \cite[Page 444]{Lovasz-Schrijver}, by making exchanges
    that preserve ATS either by the ``outer'' IH \eqref{eq: IH} or the ``inner'' IH on $k$.
    For completeness, the steps are:
    \begin{itemize}
        \item Exchange $\sigma_i$ with $\sigma_{j_1}$ to get
              an ordering $\sigma^1 = \cdots \sigma_i \cdots \sigma_{j_1}\sigma_{i+1}\cdots\sigma_n$.
              This is an exchange of ``early'' positions beween $1$ and $i$, so the
              IH \eqref{eq: IH} applied to $\sigma$ and $\sigma_1$ implies that $GOR^+_\sigma$ is ATS as $GOR^+_{\sigma^1}$.
        \item Exchange $\sigma_{j_1}$ with $\sigma_{i+1}$ in $\sigma^1$ to get
              an ordering $\sigma^2 = \cdots \sigma_i \cdots \sigma_{i+1}\sigma_{j_1}\cdots\sigma_n $.
              Note that $\sigma_{j_1}$ is connected to $\sigma_{i+1}$ by a path of length at most
              $k$. The IH on $k$ applies to $\sigma^1$ and $\sigma^2$, so $GOR^+_{\sigma^1}$ is  ATS as $GOR^+_{\sigma^2}$.
        \item Exchange $\sigma_{i}$ with $\sigma_{i+1}$ in $\sigma^2$ to get
              an ordering $\sigma^3 = \cdots \sigma_{i+1} \cdots \sigma_{i}\sigma_{j_1}\cdots\sigma_n$.  As in the first step, this is an
              ``early'' exchange, so the IH \eqref{eq: IH} used on $\sigma^2$ and $\sigma^3$
              implies that $GOR^+_{\sigma^2}$ is ATS as $GOR^+_{\sigma^3}$
        \item Exchange $\sigma_{i}$ with $\sigma_{j_1}$ in $\sigma^3$ to get
              an ordering $\sigma^4 = \cdots \sigma_{i+1} \cdots \sigma_{j_1}\sigma_{i}\cdots\sigma_n$.  We know that
              $\sigma_i$ and $\sigma_{j_1}$ are connected by an edge, so the $k=0$ base case used on $\sigma^3$ and $\sigma^4$
              implies that $GOR^+_{\sigma^3}$ is ATS as $GOR^+_{\sigma^4}$.
        \item  Exchange $\sigma_{i+1}$ with $\sigma_{j_1}$ in $\sigma^4$ to get
              the ordering $\tau$.  As in the first and third steps, the IH \eqref{eq: IH} applied to $\sigma^4$ and $\tau$
              implies that $GOR^+_{\sigma^4}$ and  $GOR^+_\tau$ are ATS.
    \end{itemize}
    Hence, $GOR^+_\sigma$ and $GOR^+_\tau$ are ATS by Lemma \ref{lem: ats eqvrel}.
\end{proof}

\subsection{Outer induction}
By Lemma \ref{lem: base 1},
we may assume that the inductive hypothesis \eqref{eq: IH} holds for
$i = D$.  Let us now suppose that \eqref{eq: IH} holds for some $i$ with $D \le i < n$ and that
and that $\sigma$ and $\tau$ are orderings so that
\[
    \sigma_j = \tau_j\qquad \text{for all $i+1 < j \le n$}
\]
We will show that $GOR^+_\sigma$ and $GOR^+_\tau$ are ATS to close the
induction.  If $\tau_{i+1} = \sigma_{i+1}$, we are done by the
inductive hypothesis. Otherwise,
we next show how to reduce to
the case of Lemma~\ref{lem:later},
by using
the inductive hypothesis \eqref{eq: IH},
which lets us
rearrange the first $i$ numbers in the
orderings.

At this point, we are assuming  that $\tau_{i+1} \neq \sigma_{i+1}$.  This means that
we have
\[
    \sigma = \cdots \sigma_{k-1}\tau_{i+1}\sigma_{k+1}\cdots \sigma_i
    \sigma_{i+1}
    \sigma_{i+2}\cdots \sigma_n
    \qquad
    \text{and}
    \qquad
    \tau = \cdots \tau_{j-1}\sigma_{i+1}\tau_{j+1}\cdots \tau_i \tau_{i+1}\tau_{i+2}\cdots \tau_n
\]
for some $1\le j,k\le i$.  Since we assume that $\sigma$ and $\tau$ are equal in positions
after $i+1$, we have
\[
    \tau = \cdots \tau_{j-1}\sigma_{i+1}\tau_{j+1}\cdots \tau_i \tau_{i+1}\sigma_{i+2}\cdots \sigma_n,
\]
and that, as sets
\[
    \{\sigma_1, \ldots, \sigma_{i+1}\} = \{\tau_1, \ldots, \tau_{i+1}\}.
\]
By the IH \eqref{eq: IH}, $GOR^+_\sigma$ is ATS as
$GOR^+_{\sigma'}$ and $GOR^+_\tau$ is ATS as  $GOR^+_{\tau'}$,
where
\[
    \sigma' = \sigma_1 \cdots \sigma_{k-1}\sigma_{k+1}\cdots \sigma_i\tau_{i+1}\sigma_{i+1}\sigma_{i+2}\cdots \sigma_n
    \qquad
    \text{and}
    \qquad
    \tau' = \sigma_1 \cdots \sigma_{k-1}\sigma_{k+1}\cdots \sigma_i\sigma_{i+1}\tau_{i+1}\sigma_{i+2}\cdots \sigma_n
\]
because these can be obtained from $\sigma$ and $\tau$ respectively by rearranging the first $i$
numbers of the ordering.  Lemma \ref{lem:later} then implies that
$GOR^+_{\sigma'}$ and $GOR^+_{\tau'}$ are ATS.  An application of Lemma \ref{lem: ats eqvrel}
closes the induction and thus proves
Proposition~\ref{prop:order}.

\section{Obtaining a GOR}
We are now in a position to complete the proof of Theorem~\ref{thm:main}.
\begin{definition}
    Let $I$ be a subset of $[n]$ of cardinality $D$.
    Let $GP(I)$
    be the
    Zariski-open subset of of $\RR^{nD}$ where the vertices of $I$ are placed in general position.
\end{definition}

\begin{lemma}
    \label{lem:nonempt1}
    Let $I\subseteq [n]$ be a fixed subset of cardinality $D$, and
    $\tau$ an ordering of $[n]$ so that $I = \{\tau_1, \ldots, \tau_D\}$.
    Then $GOR^+_\tau \cap GP(I)$ is non-empty.
\end{lemma}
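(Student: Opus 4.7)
My plan is to construct an explicit point in $GOR^+_\tau \cap GP(I)$ by seeding the first $D$ steps of the construction with a GOR of the induced subgraph on $I$ and then extending to the remaining vertices by any valid choice of parameters.

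First, I would set $G^-$ to be the subgraph of $G$ induced by $I$. The key observation is that since $\{\tau_1, \ldots, \tau_D\} = I$, the first $D$ stages $\phi_1, \ldots, \phi_D$ of the construction of $GOR^+_\tau$ for $G$ depend only on edges among vertices of $I$, and so coincide with the corresponding stages for $G^-$. In particular, the projection of $GOR^+_\tau(G)$ onto the $I$-coordinates equals $GOR^+_\tau(G^-)$, and any point of the latter is the $I$-restriction of some point of the former.

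Next, I would invoke Lemma~\ref{lem:hasGOR} applied to the $D$-vertex graph $G^-$ to secure a GOR of $G^-$ in $\RR^D$ (for instance the standard basis assignment $\e_1, \ldots, \e_D$). By the inclusion $GOR \subseteq GOR^+_\tau$ recorded earlier for any graph and ordering, this GOR lies in $GOR^+_\tau(G^-)$. Taking it as the first $D$ vertex placements, I would then run the remaining stages $\phi_{D+1}, \ldots, \phi_n$ with any parameter values (e.g.\ those that lead to $\v_{\tau_i} = \0$ for every $i > D$); since each $\phi_i$ is defined so as to always produce some output on any input, this yields a representation $(\v_{\tau_1}, \ldots, \v_{\tau_n}) \in GOR^+_\tau(G)$. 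The placements of the $I$-vertices are untouched after stage $D$, so they remain in general linear position, placing the representation in $GP(I)$ as well.

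I do not anticipate any real obstacle: the construction is deliberately arranged so that every stage admits some valid output, and the existence of the required GOR on the $D$-vertex subgraph is already handled by Lemma~\ref{lem:hasGOR}. The only thing worth checking carefully is the elementary compatibility of the constructions for $G$ and $G^-$ on their first $D$ steps, which amounts to noting that non-neighbors of $\tau_i$ among $\tau_1, \ldots, \tau_{i-1}$ with $i \le D$ are the same in $G$ and $G^-$.
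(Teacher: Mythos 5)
Your proof is correct and takes essentially the same route as the paper: both arguments observe that the first $D$ stages of the $\tau$-construction can realize a general-position placement of the $I$-vertices, which then persists because later stages leave them untouched. The paper makes this observation directly (each $\tau_i$ with $i\le D$ has fewer than $D$ preceding non-neighbors, so general-position choices are available), while you route it through the induced subgraph $G^-$, Lemma~\ref{lem:hasGOR}, and the inclusion $GOR\subseteq GOR^+_\tau$; the content is the same.
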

\begin{proof}
    When we are running the $GOR^+_\tau$ construction
    process in $\RR^D$, each of the vertices $\tau_i$ in
    $I$ has fewer than $D$ preceding neighbors.  It follows that the process can select
    $\v_{\tau_1}, \ldots, \v_{\tau_D}$ in general position.  Whatever happens later
    in the process, these vectors will remain in general position, so $GOR^+_\tau \cap GP(I)$ is non-empty.
\end{proof}
\begin{lemma}
    \label{lem:nonempt2}
    Let $G$ be an $(n-D)$-connected graph with $n$ vertices and
    $I\subseteq [n]$ be a fixed subset of cardinality $D$.  For
    any ordering $\sigma$ of $[n]$, $GOR^+_\sigma \cap GP(I)$ is
    open and dense in $GOR^+_\sigma$.
\end{lemma}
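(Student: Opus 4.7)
The plan is to reduce the general ordering $\sigma$ to a favorable ordering $\tau$ (one that starts with the vertices in $I$) by invoking Proposition~\ref{prop:order}, and then to pull back nonemptiness along the ATS relation to apply Lemma~\ref{lem:dense} on the $\sigma$ side.

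Openness is immediate: $GP(I)$ is Zariski-open in $\RR^{nD}$, so $GOR^+_\sigma\cap GP(I)$ is Zariski-open in $GOR^+_\sigma$. For density, the observation is that by Lemma~\ref{lem:dense} it suffices to verify that $GOR^+_\sigma \cap GP(I)$ is nonempty, since any nonempty Zariski-open subset of $GOR^+_\sigma$ is dense in the standard topology.

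To get nonemptiness, I would choose an auxiliary ordering $\tau$ with $\{\tau_1,\ldots,\tau_D\} = I$, which is always possible. Lemma~\ref{lem:nonempt1} then yields that $GOR^+_\tau \cap GP(I)$ is nonempty, and Zariski-open in $GOR^+_\tau$, so by Lemma~\ref{lem:dense} it is dense in $GOR^+_\tau$ in the standard topology. Since $G$ is $(n-D)$-connected, Proposition~\ref{prop:order} applies and $GOR^+_\sigma$ and $GOR^+_\tau$ are ATS, meaning $GOR^+_\sigma \cap GOR^+_\tau$ is dense in $GOR^+_\tau$. Intersecting a standard-topology-dense set with a standard-topology-open dense set inside $GOR^+_\tau$ produces a nonempty set, so
\[
    GOR^+_\sigma \cap GOR^+_\tau \cap GP(I) \neq \emptyset,
\]
and in particular $GOR^+_\sigma \cap GP(I)$ is nonempty. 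Applying Lemma~\ref{lem:dense} inside $GOR^+_\sigma$ now upgrades nonemptiness to standard-topology density, finishing the proof.

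The only subtle point is keeping track of which topology we are working in at each step: the ATS relation and Lemma~\ref{lem:dense} are both statements about the standard topology, while $GP(I)$ is described Zariski-openly, and we exploit exactly the bridge between the two that Lemma~\ref{lem:dense} provides. No further use of the connectivity hypothesis is needed beyond the one invocation of Proposition~\ref{prop:order}.
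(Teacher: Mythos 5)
Your proof is correct and follows essentially the same route as the paper: seed nonemptiness with a $\tau$ that starts with $I$ (Lemma~\ref{lem:nonempt1}), transfer it to $\sigma$ via the ATS relation from Proposition~\ref{prop:order} by intersecting with the nonempty open set $GOR^+_\tau\cap GP(I)$, and then invoke Lemma~\ref{lem:dense} for density. The only inessential difference is that you additionally note $GOR^+_\tau\cap GP(I)$ is dense in $GOR^+_\tau$, where the paper needs only that it is nonempty and open to conclude the dense set $GOR^+_\sigma\cap GOR^+_\tau$ meets it.
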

\begin{proof}
    From Lemma~\ref{lem:nonempt1}, $GOR^+_\tau \cap GP(I)$ is non-empty
    for some ordering $\tau$ of $[n]$.  Since $GOR^+_\tau \cap GP(I)$
    is Zariski-open in $GOR^+_\tau$ it is open in the standard topology
    on  $GOR^+_\tau$.   Proposition~\ref{prop:order}, implies that
    $GOR^+_\sigma \cap GOR^+_\tau$ is dense in $GOR^+_\tau$;
    in particular, it must meet the non-empty open set $GOR^+_\tau \cap GP(I)$.
    Hence, $GOR^+_\sigma  \cap GP(I)$
    is non-empty.
    As a Zariski-open subset of $GOR^+_\sigma$,
    $GOR^+_\sigma  \cap GP(I)$
    is open in the standard topology, and
    from Lemma~\ref{lem:dense}, it is dense
    in $GOR^+_\sigma$.
\end{proof}

The deduction of Theorem \ref{thm:main} is now formulaic.  Fix
any ordering $\sigma$ of $[n]$.  By Lemma \ref{lem:nonempt2}, each
set in the intersection
\[
    \bigcap_I (GOR^+_\sigma\cap GP(I))\qquad \text{($I\subseteq [n]$ and $|I| = D$)}
\]
is open and dense in $GOR^+_\sigma$.  A finite intersection of such sets
will also be open and dense.  We conclude that $GOR\subseteq GOR^+_\sigma$ is
open and dense.  In particular, it is not empty.
This completes the proof of the Theorem~\ref{thm:main}.

We note that density of $GOR$ in
$GOR^+_\sigma$
is also observed in \cite[Equation 10.32]{lovbook}
as a consequence of Theorem~\ref{thm:main}.

\section{Comparison with Lovász--Saks--Schrijver}
Here we compare and contrast our new proof with
the original in~\cite{Lovasz-Schrijver,Lovasz-Schrijver2}.  The authors
consider, for each fixed ordering $\sigma$ of $[n]$ a randomized process.
For simplicity, we describe it for $\sigma = 12\cdots (n-1)n$.  The
input $G$ is a graph on $n$ vertices that is $(n-D)$-connected.
For each $i=1, \ldots, n$:
\begin{itemize}
    \item Find the non-neighbors $1 \le i_1 \le i_2 \le \cdots \le i_k < i$ of $i$
          that come before it.
    \item Select $\v_i$ uniformly at random among unit-length vectors orthogonal to
          the linear span of $\v_{i_1},\ldots, \v_{i_k}$.  (A slight variation for controlling the
          random choice is described in~\cite{lovbook}.)
\end{itemize}
This process produces a probability distribution $P_\sigma(\v)$ over $OR$,
which is, in general, a reducible algebraic set. In this
process, the dimension of the orthogonal space
used for placing $\v_i$
can vary based on the positions of the preceding non-neighboring vectors.
As such, it is
not a randomized version of the $GOR^+_\sigma$ generation process
described above.

The probabilistic counterpart to being ATS employed in \cite{Lovasz-Schrijver,Lovasz-Schrijver2}
relates the zero probability events of two distributions.
\begin{definition}\label{def: mac}
    Two probability distributions $\mu$ and $\nu$ over $\RR^N$ are called
    \defn{mutually absolutely continuous (MAC)} if they have the same null
    sets.
\end{definition}
The main step in \cite{Lovasz-Schrijver,Lovasz-Schrijver2} is to establish:
\begin{proposition}
    \label{prop:orderLSS}
    If $G$ is $(n-D)$-connected, and $\sigma$ and $\tau$ are two vertex orderings,
    then $P_{\sigma}$  and $P_{\tau}$ are MAC.
\end{proposition}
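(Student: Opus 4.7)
The plan is to mirror the argument we just gave for Proposition~\ref{prop:order}, systematically substituting the distributions $P_\sigma$ for the sets $GOR^+_\sigma$ and the MAC relation for ATS. The outer and inner inductions on orderings, driven by transpositions and the connectivity-based path argument of Lemma~\ref{lem:con}, transfer wholesale; the work is to verify probabilistic analogues of the three structural inputs. Transitivity of MAC (the analogue of Lemma~\ref{lem: ats eqvrel}) is immediate: if $\mu,\nu$ share null sets and $\nu,\rho$ share null sets, then so do $\mu,\rho$.

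The analogue of Lemma~\ref{lem:edgeCon} is in fact cleaner in the probabilistic setting. If $\{\sigma_i,\sigma_{i+1}\}$ is an edge and $\tau$ swaps positions $i$ and $i+1$, then the conditional placement of $\v_{\sigma_i}$ does not enter the non-neighbor list governing $\v_{\sigma_{i+1}}$, and vice versa; the two conditional laws commute and all other conditional laws are shared by $\sigma$ and $\tau$. Thus $P_\sigma = P_\tau$ exactly. For the analogue of Lemma~\ref{lem: base 1}, I would first handle the $D$-vertex case: at step $i$ we sample uniformly on a sphere of dimension at least $D-i\ge 0$, and the event that preceding non-neighbors become linearly dependent is a Zariski-closed condition on the already-chosen vectors, hence null for the Haar-pushforward reference measure. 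This yields MAC for all orderings against a common reference, and the extension to $n$ vertices uses the same shared-tail composition as in Lemma~\ref{lem: base 1}.

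The inner induction of Lemma~\ref{lem:later} then proceeds step for step: the five-stage exchange sequence $\sigma\to\sigma^1\to\sigma^2\to\sigma^3\to\sigma^4\to\tau$ uses at each stage either the outer hypothesis \eqref{eq: IH} (a rearrangement of the first $i$ positions) or the inner hypothesis on path length, and transitivity of MAC closes the chain. The outer induction closes exactly as in Section~3, now with MAC replacing ATS throughout.

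The main obstacle, and what must be formulated carefully, is the pushforward principle that plays the role of $\phi$ and $\psi$ in our ATS argument: if $\mu,\mu'$ are MAC laws on the configurations of the first $i$ vertices, and we apply the remaining conditional Markov kernels of the process (identical for $\sigma$ and $\tau$ in the relevant steps), the resulting laws on $\RR^{nD}$ must again be MAC. This requires that the kernels be nondegenerate on a set of full $\mu$-measure (equivalently $\mu'$-measure) — i.e.\ that the event ``the non-neighbor set of some later vertex becomes linearly dependent'' is null for both. The $(n-D)$-connectivity hypothesis, used exactly as in Lemma~\ref{lem:con}, guarantees that such degeneracies are confined to proper algebraic subsets of the parameter space, hence null under any absolutely continuous reference; this is the probabilistic counterpart of the Zariski-density argument of Lemma~\ref{lem:dense}.
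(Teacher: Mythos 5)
The paper does not actually supply a proof of this proposition: it is cited as the main step in Lovász--Saks--Schrijver, with the observation that the LSS double induction is ``structurally similar'' to the one the paper gives for Proposition~\ref{prop:order}. Your proposal is a reasonable reconstruction of exactly that LSS argument. You correctly identify the three structural inputs that must be replaced --- transitivity of MAC in place of Lemma~\ref{lem: ats eqvrel}, the edge-swap identity $P_\sigma = P_\tau$ in place of Lemma~\ref{lem:edgeCon}, and the $D$-vertex base case in place of Lemma~\ref{lem:hasGOR}/Lemma~\ref{lem: base 1} --- and you observe, correctly, that the inner and outer inductions transfer wholesale once these are in place.

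One point deserves a correction, because you present it as ``the main obstacle'' when it is not. You assert that pushing forward MAC laws through the remaining shared conditional kernels ``requires that the kernels be nondegenerate on a set of full measure,'' and you try to supply this via connectivity as in Lemma~\ref{lem:con}. In fact no hypothesis is needed. If $\mu$ and $\mu'$ are MAC on a space $X$ and $K$ is \emph{any} shared Markov kernel from $X$ into $Y$, the induced joint laws $\tilde\mu(dx\,dy) = \mu(dx)K(x,dy)$ and $\tilde\mu'(dx\,dy) = \mu'(dx)K(x,dy)$ are automatically MAC: for measurable $C$, $\tilde\mu(C)=0$ iff $K(x,C_x)=0$ for $\mu$-a.e.\ $x$, and the $\mu$-a.e.\ and $\mu'$-a.e.\ quantifiers coincide by hypothesis. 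The ``dimension jump'' phenomenon you flag is real --- it is precisely why the paper remarks that $P_\sigma$ is not a randomized version of the $GOR^+_\sigma$ process --- but it does not obstruct the pushforward; the kernel is well-defined regardless of whether degeneracy occurs. Moreover, trying to derive ``degeneracies are null'' directly from $(n-D)$-connectivity would be circular: that genericity is essentially the conclusion of the whole argument, not an input. The $D$-vertex base case genericity you need can instead be established by a direct step-by-step dimension count (at step $i\le D$, the sampling sphere has dimension at least $D-i$, and its intersection with the span of the earlier vectors has strictly smaller dimension), with no appeal to Lemma~\ref{lem:con}.
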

This proposition is similar in form to our Proposition~\ref{prop:order},
and, indeed, the double inductive proof above is structurally similar to
the one of Proposition~\ref{prop:orderLSS} in \cite{Lovasz-Schrijver,Lovasz-Schrijver2}.
The key difference is that we have replaced statements about null sets of
distributions $P_\sigma$ over $OR$ with topological statements about the irreducible sets
$GOR^+_\sigma$.

\def\v{\oldv}

\newpage
\appendix

\section{Proof of Lemma \ref{lem: ats eqvrel}}

Recall that a semi-algebraic set
$S \in \RR^D$ can be \defn{stratified}\cite{RoyAlg}.
This means that $S$ can be written as
the
disjoint union of a finite number of smooth submanifolds
$S_i \in \RR^D$ (of various dimensions)
called
the \defn{strata} of this stratification.
A stratum  $S_i$ of dimension $d_i$ has the property that its
closure in $S$ is the union of
some other strata $S_j$ of strictly
lower dimension.
The \defn{dimension} of $S$ is defined as the
maximum dimension of its smooth strata
in one (equiv. any)
stratification.

\begin{lemma}
    \label{lem:denseOpen}
    Suppose  $A$ and $B$ are semi-algebraic sets
    with $A$ a dense subset of $B$. Then there exists
    a
    set $U \subseteq A$ that is open and dense in $B$.
\end{lemma}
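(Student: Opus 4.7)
The plan is to stratify $B$ compatibly with the partition $B = A \sqcup (B \setminus A)$, and then let $U$ be the union of those strata that are open in $B$. Using the stratification machinery recalled at the start of this appendix (with the refinement that $A$ is itself a union of strata, which is a standard consequence of the results in \cite{RoyAlg}), I would produce a finite decomposition $B = \bigsqcup_i S_i$ into smooth semi-algebraic manifolds such that each $S_i$ is either contained in $A$ or disjoint from $A$, and satisfying the frontier condition ($S_i \cap \overline{S_j} \neq \emptyset$ implies $S_i \subseteq \overline{S_j}$), which follows from the closure-of-strata property stated above. With $U$ so defined, openness of $U$ in $B$ is immediate, and the two remaining tasks are to show $U \subseteq A$ and that $U$ is dense in $B$.

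The containment $U \subseteq A$ is the easy part: any stratum $S_i$ open in $B$ is a nonempty open subset of $B$, so density of $A$ forces $A \cap S_i \neq \emptyset$, and then $S_i \subseteq A$ because the stratification refines $A \sqcup (B \setminus A)$.

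For density of $U$ in $B$, my plan is to order the strata by $S_i \preceq S_j$ iff $S_i \subseteq \overline{S_j}$, and first establish the equivalence: a stratum $S_k$ is open in $B$ if and only if it is maximal under $\preceq$. The ``not-open $\Rightarrow$ not-maximal'' direction applies the frontier condition to a point $p \in S_k$ that is a limit of points in $B \setminus S_k$, which must accumulate in some fixed $S_\ell \neq S_k$ by finiteness, forcing $S_k \subseteq \overline{S_\ell}$. The converse uses that a $B$-open neighborhood of $p \in S_k$ cannot contain limits from a disjoint stratum. Once this equivalence is in hand, for any stratum $S_i$ I would iteratively replace it by some $S_j \neq S_i$ with $S_i \subseteq \overline{S_j}$; this strictly increases dimension (since $S_i \subseteq \overline{S_j} \setminus S_j$ forces $\dim S_i < \dim S_j$), so the process terminates at a maximal, hence open-in-$B$, stratum $S_m \subseteq U$, yielding $S_i \subseteq \overline{S_m} \subseteq \overline{U}$. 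Summing over $i$ gives $B = \overline{U}$.

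The one step requiring care is this open-iff-maximal equivalence, which is where the details of the stratification really enter; the rest of the argument is essentially bookkeeping and the inevitable use of the hypothesis that $A$ is dense in $B$ at the very end of the $U \subseteq A$ step.
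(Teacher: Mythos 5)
Your proof is correct, and it takes a genuinely different route from the paper's. The paper stratifies $B$ alone, keeps the union $B'$ of closure-maximal strata, and only then intersects with $A$: since $A\cap B'_i$ need not be open in $B'_i$, the paper stratifies each $A\cap B'_i$ a second time, keeps the top-dimensional pieces, and appeals to invariance of domain (a $d_i$-manifold sitting inside a $d_i$-manifold is open) to see that those pieces are open in $B'_i$. You instead front-load the work by choosing a stratification of $B$ that is \emph{compatible} with the semi-algebraic partition $A\sqcup(B\setminus A)$, which you correctly flag as a standard refinement available in \cite{RoyAlg} but not spelled out in the appendix preamble. With compatibility in hand, the open-in-$B$ strata are automatically contained in $A$ once one observes they must meet $A$ by density, so no second stratification and no invariance of domain is needed. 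Your density argument for $U$ --- that the open strata are exactly the $\preceq$-maximal ones, and every stratum lies in the closure of a maximal one by iterating the frontier relation along strictly increasing dimension --- is the same idea the paper uses to argue $B'$ is dense in $B$, but you make the dimension-induction explicit where the paper leaves it implicit. In short: the paper trades a stronger stratification hypothesis for invariance of domain; you trade the reverse. Both are sound; yours is arguably tidier once the compatible-stratification fact is granted, while the paper gets by with only the bare stratification statement it records.
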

\begin{proof}

    We first stratify $B$.
    We then define $B'$ as the union of the
    strata of that are not in the closure of
    any other stratum (necessarily of higher dimension).
    We denote the strata comprising $B'$ as $B'_i$,
    each with its dimension denoted as $d_i$.
    Since we only removed strata in the closure of
    maintained strata,
    $B'$ is dense in $B$.

    Let $\x$ be a point in $B'_i$.
    Suppose $\x$
    was in the closure of $B\backslash B'_i$, then
    $B'_i$, as the stratum containing $\x$, would
    have been removed.
    Thus $\x$ must have a
    neighborhood in $B$ that
    is contained in $B'_i$.
    Thus $B'_i$ is open in $B$.

    We next define $A'_i := A \cap B'_i$
    and we define $A' :=\cup_i A'_i$.
    Since $A$ is dense in $B$, and $B'_i$ is open
    in $B$, $A'_i$ is dense in $B'_i$.
    It follows that $A'_i$ must be of dimension $d_i$, otherwise it could
    not be dense in $B'_i$.

    Next we stratify (each) $A'_i$.
    We define $A''_i$ to be the union of the $d_i$ (top)
    dimensional strata comprising $A'_i$.
    We denote the strata comprising $A''_i$  as $A''_{ij}$.
    $A''_i$ must be dense in $B'_i$, as we have
    only removed
    lower dimensional manifolds, which can be nowhere
    dense in $B'_i$.
    $A''_{ij}$, is a $d_i$-dimensional smooth manifold included in $B'_i$, a smooth manifold of the same dimension.
    Thus,
    $A''_{ij}$
    must
    be an open subset of $B'_i$.
    Finally we define $U:=\cup_i A''_i$ giving us
    a set that
    must be open and dense in $B'$ and thus also in $B$.
\end{proof}

\begin{lemma}
    \label{lem:sadense}
    Suppose that $A$, $B$, and $C$ are semi-algebraic
    sets, with $A$ and $B$  dense subsets of $C$.
    Then $A \cap B$ is dense in $C$.
\end{lemma}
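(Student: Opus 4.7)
The plan is to upgrade the density hypotheses into something stronger using the previous lemma, and then invoke a standard topological argument. The reason some upgrade is necessary is that density of two sets in a common space does not in general pass to their intersection (e.g.\ $\QQ$ and $\RR\setminus\QQ$ are both dense in $\RR$ but have empty intersection). What saves us here is semi-algebraicity, encoded in Lemma~\ref{lem:denseOpen}: inside any semi-algebraic dense subset there sits a subset that is open and dense in $C$.

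Concretely, I would first apply Lemma~\ref{lem:denseOpen} to $(A,C)$ to obtain $U\subseteq A$ open and dense in $C$, and again to $(B,C)$ to obtain $V\subseteq B$ open and dense in $C$. Next I would verify that $U\cap V$ is open and dense in $C$ by a standard point-set argument: openness is immediate, and for density, given any non-empty open $W\subseteq C$, density of $U$ makes $W\cap U$ a non-empty open subset of $C$, and density of $V$ then forces $(W\cap U)\cap V \neq \emptyset$. Finally, since $U\cap V\subseteq A\cap B$ and $U\cap V$ is dense in $C$, the set $A\cap B$ is dense in $C$, as required.

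The main obstacle, and the whole content of the argument, is really hidden in the invocation of Lemma~\ref{lem:denseOpen}; the rest is formal. Once one knows that semi-algebraic density can be refined to containing an open dense subset, two such refinements intersect nicely because openness interacts well with density in any topological space.
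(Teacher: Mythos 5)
Your proof is correct and is essentially the same as the paper's: both apply Lemma~\ref{lem:denseOpen} twice to extract open dense sets $U\subseteq A$ and $V\subseteq B$, observe that $U\cap V$ is dense in $C$, and conclude via $U\cap V\subseteq A\cap B$. You merely spell out the standard point-set step showing $U\cap V$ is dense, which the paper leaves implicit.
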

\begin{proof}
    From Lemma~\ref{lem:denseOpen}, there exist
    $U\subseteq A$ and $V\subseteq B$
    that are both open and dense in $C$. Thus
    $U \cap V$ is dense in $C$. Thus $A \cap B$
    is dense in $C$.
\end{proof}

\begin{proof}[Proof of Lemma \ref{lem: ats eqvrel}]
    By assumption, both $A \cap B$ and $B \cap C$
    are dense subsets of $B$. From Lemma~\ref{lem:sadense},
    their intersection $A \cap B \cap C$ is a
    dense subset of $B$. Thus $A \cap B \cap C$
    is dense in both $A\cap B$ and $B \cap C$.
    By assumption, $A \cap B$ is dense in $A$ thus $A \cap B \cap C$
    is dense in  $A$.
    Similarly, by assumption, $B \cap C$ is dense in $C$ thus $A \cap B \cap C$
    is dense in  $C$. Thus $A \cap C$ is dense in
    both $A$ and $C$.
\end{proof}

\end{document}